\newcommand{\todo}[1][\null]{\ensuremath{\clubsuit}}
\long\def\@makecaption#1#2{%
  \vskip\abovecaptionskip\footnotesize
  \sbox\@tempboxa{#1. #2}%
  \ifdim \wd\@tempboxa >\hsize
    #1. #2\par
  \else
    \global \@minipagefalse
    \hb@xt@\hsize{\hfil\box\@tempboxa\hfil}%
  \fi
  \vskip\belowcaptionskip}
\newtheorem{theorem}{Theorem}
\newtheorem*{problem*}{Problem}
{\theoremstyle{definition}

\newtheorem*{remark*}{Remark}
}
\begin{document}

\par\noindent {\LARGE\bf
On the invariance properties of Vaidya-Bonner geodesics via symmetry operators\par}

{\vspace{5mm}\par\noindent {\large Davood Farrokhi$^{\dag}$, Rohollah Bakhshandeh-Chamazkoti$^\S$ and Mehdi Nadjafikhah$^{\ddag}$
} \par\vspace{3mm}\par}

{\vspace{3mm}\par\noindent {\it
$^{\dag}$Department of Mathematics, Karaj Branch,
Islamic Azad University, karaj, Iran.
}}

{\vspace{3mm}\par\noindent {\it
$^\S$Department of Mathematics,  Faculty of Basic Sciences,
Babol  Noshirvani University of Technology, Babol, Iran.\par

}}
{\vspace{3mm}\par\noindent {\it
$^\ddag$School of Mathematics,
Iran University of Science and Technology
Narmak, Tehran, 16846-13114, Iran
}}

{\vspace{3mm}\par\noindent {\it
\textup{E-mail:} davoodfarokhi61@gmail.com, r\_bakhshandeh@nit.ac.ir, m\_nadjafikhah@iust.ac.ir
}\par}

\vspace{8mm}\par\noindent\hspace*{10mm}\parbox{140mm}{\small
In the present paper, we try to investigate the Noether symmetries and Lie point symmetries of the
Vaidya-Bonner geodesics. Classification of one-dimensional subalgebras of Lie
point symmetries are considered. In fact, the collection of pairwise non-conjugate one-dimensional
subalgebras that are called the optimal system of subalgebras is determined. Moreover, as illustrative
examples, the symmetry analysis is implemented on two special cases of the system.}\par\vspace{4mm}

\section{Introduction}

Noether's theorem \cite{Noe1, Olv1} provides a method for finding conservation laws of differential
equations arising from a known Lagrangian and having a known Lie symmetry.
This theorem relies on the availability of a Lagrangian and the
corresponding Noether symmetries which leave invariant the action integral, \cite{Mahomed,Narain1}.

Since the geodesic equations follow from the variation of the geodesic
Lagrangian defined by the metric and due to the fact that the Noether symmetries
are a subgroup of the Lie symmetries of these equations, one should expect
a relation of the Noether symmetries of this Lagrangian with the projective
collineations of the metric or with its degenerates, \cite{Bakhshandeh1, Bakhshandeh2, Ibragimov, Tsamparlis}.
In some works, \cite{Bokhari, Kara}, the relation between  the Noether symmetries
and the Lie point symmetries (Killing vectors) of some special spacetimes are discussed.

In \cite{Tsamparlis},  Tsamparlis and Paliathanasis have computed the Lie point symmetries and the Noether symmetries explicitly
together with the corresponding linear and quadratic first integrals
 for the Schwarzschild spacetime and the Friedman Robertson Walker (FRW)
spacetime. These authors, in another paper \cite{Paliathanasis},  have proved a theorem that relates the Lie
symmetries of the geodesic equations in a Riemannian
space with the collineations of the metric. They applied
the results to Einstein spaces and spaces of constant
curvature.

In, \cite{Narain1, Narain2},  the authors present a complete analysis of symmetries of classes of wave equations that arise as a consequence
of some Vaidya metrics.
Now in this paper, we try to find Lie point symmetries and Noether symmetries for
the Vaidya-Bonner metric
\begin{eqnarray}
ds^{2}=-\left(1-\frac{M(t)}{r}+\frac{Q(t)}{r^{2}}\right)dt^{2}-2dtdr
+r^{2}(d\theta^2+\sin^{2}\theta \,d\phi^2)\,, \label{eq:VB}
\end{eqnarray}
where $M(t)$ is dynamical mass of the black hole and $Q(t)$ is electric charges that both of them depends to  the advanced Eddington time coordinate $t$, \cite{Niu}.
As examples we consider two cases with
$M(t)=1, Q(t)=t$ and $M(t)=t, Q(t)=t^2$.

In general relativity, the Vaidya metric describes the non-empty external spacetime of a
spherically symmetric and nonrotating star which is either emitting or absorbing null dust.
It is named after the Indian physicist Prahalad Chunnilal Vaidya and constitutes the simplest non-static
generalization of the non-radiative Schwarzschild solution to Einstein's field equation,
and therefore is also called the {\it radiating (shining) Schwarzschild metric}.
%
\section{Noether symmetries}
Suppose $(M, g)$ is Riemannian manifold  of dimension $n$. In a local space-time coordinate like that
${\bf x}(s)=(x^1(s),\ldots,x^n(s))$, the geodesic equations form a system of $n$
nonlinear, second order ordinary differential equations
\begin{eqnarray}\label{geodesicequation}
\ddot{x}^i(t)+\Gamma^i_{jk}\dot{x}_j(t)\dot{x}_k(t)=0,\qquad i=1, 2, \ldots, n,
\end{eqnarray}
where $\Gamma^i_{jk}$ are the Christoffel symbols and ``.'' represents derivative in terms of $s$.
Consider a second order system of $m$ ordinary differential equations (\ref{geodesicequation}) with following form
\begin{eqnarray}\label{eq:system}
 E_i(s, {\bf x}(s), {\bf x}^{(1)}(s), {\bf x}^{(2)}(s))=0, \quad i=1, \ldots, n.
\end{eqnarray}
Consider a vector field which is defined on a
real parameter fibre bundle over the manifold, \cite{Ibragimov},
\begin{eqnarray}
{\bf X}=\overline{\xi}(s, x^\mu)\partial_s+\overline{\eta}^\nu(s, x^\mu)\partial_ {x^\nu},
\end{eqnarray}
where $\mu, \nu= 1, 2, 3, 4$. The first prolongation of the above vector field defined on the real parameter fibre bundle over the tangent
bundle to the manifold, is expressed as follows:
\begin{eqnarray}\label{eq:firstprol}
{\bf X}^{[1]}={\bf X}+\left(\overline{\eta}^\nu_{,s}+\overline{\eta}^\nu_{,\mu}\dot{x}^\mu-\overline{\xi}_{,s}\dot{x}^\nu-\overline{\xi}_{,s}\dot{x}^\mu\dot{x}^\nu\right)\partial_{\dot{x}^\nu},
\end{eqnarray}
then ${\bf X}$ is a {\it Noether point symmetry} of the Lagrangian, \cite{Ibragimov},
\begin{eqnarray}
{\mathcal L}(s, x^\mu, \dot{x}^\mu)=g_{\mu\nu}(x^\sigma)\dot{x}^\mu\dot{x}^\nu,
\end{eqnarray}
if there exists a gauge function, $A(s, x^\mu)$, such that
\begin{eqnarray}\label{eq:tasa}
{\bf X}^{[1]}{\mathcal L}+(D_s\xi){\mathcal L}=D_sA,
\end{eqnarray}
where
\begin{eqnarray}\label{eq:ds}
D_s=\partial_ s+ \dot{x}^\mu\partial_{x^\mu}.
\end{eqnarray}
Since the corresponding Euler-Lagrange equations (geodesic equations) are second order ordinary differential equations,
 one generally takes first order Lagrangian.
 Particularly, we take ${\mathcal L}(s, x_i, \dot{x}_i)$, where ``dots'' denotes differentiation in terms of $s$,
 which results a set of second ODEs
\begin{eqnarray}\label{eq:secode}
\ddot{x}^\mu=g(s, x^\mu, \dot{x}^\mu).
\end{eqnarray}
\section{Lie point symmetries}
Consider a second order system of $m$ ordinary differential equations
(\ref{eq:system}) as geodesic equations of our given Riemannian metric $g$.
We consider a one-parameter Lie group of transformations acting on $(s, {\bf x})$-space with following forms
\begin{eqnarray}\label{eq:onepara}
\bar{s}= s+\epsilon\xi(s, {\bf x}(s))+O(\epsilon^2),\qquad \bar{x}^\alpha(s)= x^\alpha(s)+\epsilon\eta^\alpha(s, {\bf x}(s))+O(\epsilon^2),
\end{eqnarray}
where $\alpha=1, \ldots, n$. The infinitesimal generator ${\bf X}$ associated with the group of transformations (\ref{eq:onepara}) are
\begin{eqnarray}\label{infigener}
{\bf X}=\xi(s, {\bf x})\partial_s+\eta^\alpha(s, {\bf x})\partial_{x^\alpha},
\end{eqnarray}
the second order prolongation of ${\bf X}$ is given by
\begin{eqnarray}\label{jprolong}
{\bf X}^{[2]}={\bf X}+\eta^\alpha_{,(1)}(s, {\bf x}, {\bf x}^{(1)})\partial_{ x^\alpha_{,(1)}}+
\eta^\alpha_{,(2)}(s, {\bf x}, \ldots, {\bf x}^{(2)})\partial_{x^\alpha_{,(2)}},
\end{eqnarray}
in which the prolongation coefficients are
\begin{eqnarray}\label{jprolongcoeff}
\eta^\alpha_{,(1)}=D\eta^\alpha_{,(0)}-x^\alpha_{,(1)}D\xi,\qquad \eta^\alpha_{,(2)}=D\eta^\alpha_{,(1)}-x^\alpha_{,(2)}D\xi
\end{eqnarray}
where $\eta^\alpha_{,(0)}=\eta^\alpha(s, {\bf x})$ and $D$ is the total derivative operator.

The invariance of the system (\ref{eq:system})   under the one--parameter Lie group of transformations (\ref{eq:onepara})
leads to the invariance criterions.
So ${\bf X}$ is a point symmetry generator of (\ref{eq:system}) if and
only if
\begin{eqnarray}\label{invcond}
{\bf X}^{[2]}E_i\big|_{E_i=0}=0.
\end{eqnarray}
Using (\ref{invcond}) we find a system of  partial differential equations that is called the {\it determining equations}.
By solving the determining PDEs, the symmetry operators of the considered geodesics will be found.
\section{Symmetry computation of Vaidya-Bonner geodesics in general form}
\begin{theorem}
The Lie algebra of Noether symmetries
associated to Vaidya-Bonner metric (\ref{eq:VB}) for the arbitrary functions  $M(t)$ and $Q(t)$ is generated by
following infinitesimals:
\begin{gather}\label{Noethersymm1}
\begin{array}{lcl}
 &{\bf X}_1=\partial_s, \qquad {\bf X}_2=\partial_t,\qquad  X_3=\partial_\varphi \qquad
 {\bf X}_4=-\cos\varphi\;\partial_\theta+\sin\varphi\cot\theta\;\partial_\varphi,\\[3mm]
 &{\bf X}_5=\sin\varphi\;\partial_\theta+\cos\varphi\csc\theta\;\partial_\varphi.
\end{array}
\end{gather}
\end{theorem}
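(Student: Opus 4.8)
The plan is to apply the Noether symmetry condition \eqref{eq:tasa} directly to the explicit Lagrangian obtained from the Vaidya-Bonner metric \eqref{eq:VB}. First I would write out
\[
{\mathcal L}=-\Bigl(1-\tfrac{M(t)}{r}+\tfrac{Q(t)}{r^2}\Bigr)\dot t^{\,2}-2\dot t\dot r+r^2\dot\theta^{\,2}+r^2\sin^2\theta\,\dot\varphi^{\,2},
\]
with coordinates $(x^\mu)=(t,r,\theta,\varphi)$ and generator ${\bf X}=\overline\xi\,\partial_s+\overline\eta^{\,t}\partial_t+\overline\eta^{\,r}\partial_r+\overline\eta^{\,\theta}\partial_\theta+\overline\eta^{\,\varphi}\partial_\varphi$, all coefficients depending on $(s,t,r,\theta,\varphi)$. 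Substituting the first prolongation \eqref{eq:firstprol} into \eqref{eq:tasa} and demanding the identity hold for all values of the velocities $\dot x^\mu$ yields the Noether determining equations: one collects coefficients of the monomials $1,\dot x^\mu,\dot x^\mu\dot x^\nu,\dot x^\mu\dot x^\nu\dot x^\sigma$ (the cubic terms coming from the $(D_s\xi){\mathcal L}$ piece and the quadratic prolongation term), each of which must vanish separately. This is a large but completely mechanical overdetermined linear PDE system for $\overline\xi,\overline\eta^{\,\mu},A$.

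Next I would solve that system in stages. The cubic-in-velocity coefficients force $\overline\xi=\overline\xi(s)$ only (no dependence on the $x^\mu$), which is the standard first reduction. The coefficients of the purely quadratic terms $\dot x^\mu\dot x^\nu$ reproduce, up to the gauge, the conformal/homothetic Killing-type equations for $g_{\mu\nu}$ twisted by $\overline\xi'(s)$; since the Vaidya-Bonner metric for generic $M(t),Q(t)$ has no homothety and only the obvious isometries, these equations pin down $\overline\eta^{\,\mu}$ to be $s$-independent Killing vectors plus at most an affine $\overline\xi$. The essential point is the $t$-component: the $\dot t^{\,2}$ coefficient involves $\partial_t\bigl(1-M/r+Q/r^2\bigr)=-M'(t)/r+Q'(t)/r^2$, and because $1,1/r,1/r^2$ are linearly independent functions of $r$, matching powers of $r$ kills any nontrivial flow in the $t$-direction beyond the translation $\partial_t$ — this uses genuinely that $M,Q$ are arbitrary (so $M',Q'$ are not both constrained away). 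What survives is: translations $\partial_s$ and $\partial_t$, and the $so(3)$ acting on the round $S^2$ factor $r^2(d\theta^2+\sin^2\theta\,d\varphi^2)$, namely $\partial_\varphi$ together with the two rotations $-\cos\varphi\,\partial_\theta+\sin\varphi\cot\theta\,\partial_\varphi$ and $\sin\varphi\,\partial_\theta+\cos\varphi\csc\theta\,\partial_\varphi$. One then checks the gauge function can be taken to be $A\equiv 0$ (or constant) for each of these, so they are genuine Noether symmetries.

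The concluding step is to verify closure: compute the Lie brackets of ${\bf X}_1,\dots,{\bf X}_5$ and confirm they span a $5$-dimensional Lie algebra, with $\{{\bf X}_3,{\bf X}_4,{\bf X}_5\}\cong so(3)$ and ${\bf X}_1,{\bf X}_2$ central. I expect the main obstacle to be purely bookkeeping: organizing the determining equations from \eqref{eq:tasa} — there are many monomials in the four velocities — and carefully exploiting the linear independence of $\{1,1/r,1/r^2\}$ (and of $\{1,M'(t),Q'(t)\}$ as functions, given genericity) to rule out the spurious candidate symmetries that would appear for special $M,Q$ such as Schwarzschild ($M$ const, $Q=0$). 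The rotational part is routine since it is identical to the Schwarzschild/round-sphere computation already in the literature \cite{Tsamparlis}; the novelty and the delicate case-checking lie entirely in the $(t,r)$-sector, and handling the ``arbitrary $M(t),Q(t)$'' hypothesis rigorously is where care is needed.
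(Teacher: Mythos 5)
Your overall strategy --- prolong the generator, substitute into the Noether condition \eqref{eq:tasa}, separate by monomials in the velocities, and solve the resulting overdetermined linear system in stages --- is exactly the route the paper takes (its proof is essentially a one-line assertion that ``the equation leads to some differential equations whose solutions are \eqref{Noethersymm1}''), and your staging (cubic terms force $\overline\xi=\overline\xi(s)$, quadratic terms give a homothety-type equation, and you keep the gauge $A$ general rather than setting $A=0$ from the outset) is sound and more informative than the paper's.

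However, there is a genuine gap at the one step that actually matters for ``arbitrary $M(t),Q(t)$'': your own analysis of the $\dot t^{\,2}$ coefficient eliminates ${\bf X}_2=\partial_t$, yet you list it among the survivors. The relevant term is $\overline\eta^{\,t}\,\partial_t g_{tt}=\overline\eta^{\,t}\bigl(M'(t)/r-Q'(t)/r^2\bigr)$; invoking, as you do, the linear independence of $1,1/r,1/r^2$ and the arbitrariness of $M',Q'$ forces $\overline\eta^{\,t}=0$, not merely ``$\overline\eta^{\,t}$ constant.'' One can see this directly: for ${\bf X}=\partial_t$ the prolongation vanishes and condition \eqref{eq:tasa} reduces to $\bigl(M'(t)/r-Q'(t)/r^2\bigr)\dot t^{\,2}=D_sA$, whose left-hand side is quadratic in the velocities while $D_sA$ is at most linear, so it fails unless $M'=Q'=0$. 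Thus $\partial_t$ is a Noether (indeed Killing) symmetry only for constant $M$ and $Q$, and the five-dimensional algebra \eqref{Noethersymm1} cannot be derived for arbitrary $M(t),Q(t)$. The defect originates in the theorem statement itself, and the paper's own Subsection 6.1 corroborates this: for $M=1$, $Q=t$ it obtains only the four generators $\partial_s,\partial_\varphi$ and the two extra rotations, with no $\partial_t$. As written, your proof is internally inconsistent at this point; to complete it you must either drop ${\bf X}_2$ from the conclusion (leaving $\partial_s$ plus the $so(3)$ of the round sphere) or restrict the hypothesis to constant $M,Q$.
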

\begin{proof}
Associated  Lagrangian of the (\ref{eq:VB}) metric for arbitrary functions  $M(u)$ and $Q(u)$ is
\begin{eqnarray}
{\mathcal L}=-\left(1-\frac{M(t)}{r}+\frac{Q(t)}{r^{2}}\right)\dot{t}^{2}-2\dot{t}\dot{r}+r^{2}(\dot{\theta}^2+\sin^{2}\theta\,\dot{\phi}^2),\label{Lagrangian}
\end{eqnarray}
where the ``dot'' represents derivative in terms of $s$.
Suppose that the vector field
$${\bf X}=\xi\;\partial_s+\eta^1\;\partial_t+\eta^2\;\partial_r+\eta^3\;\partial_\theta
+\eta^4\;\partial_\varphi,$$
is the Noether symmetry operator of (\ref{Lagrangian}). Assuming $A=0$ in  (\ref{eq:tasa}) formula, we have
\begin{eqnarray}\label{eq:tasa1}
{\bf X}^{[1]}{\mathcal L}+(D_s\xi){\mathcal L}=0,
\end{eqnarray}
where
\begin{eqnarray}\label{eq:ds1}
D=\partial_s+\dot{t}\;\partial_t+\dot{r}\;\partial_r+\dot\theta\;\partial_\theta+\dot\varphi\;\partial_\varphi.
\end{eqnarray}
The equation (\ref{eq:tasa1}) leads to some differential equations that their solutions are the Noether symmetries (\ref{Noethersymm1}).
\end{proof}
\begin{theorem}
Lie algebra of point symmetries
associated to Vaidya-Bonner metric \eqref{eq:VB}, for  arbitrary functions $M(u)$ and $Q(u)$  is generated by:
\begin{gather} \label{Liepoint1}
\begin{array}{lcl}
&{\bf X}_1=\partial_s, \quad {\bf X}_2=\partial_t, \quad {\bf X}_3=\partial_\varphi, \quad {\bf X}_4=\cot(\theta)\sin(\varphi)\partial_\varphi-\cos(\varphi)\partial_\theta,\\[3mm]
&{\bf X}_5=\cot(\theta)\cos(\varphi)\partial_\varphi+\sin(\varphi)\partial_\theta.
\end{array}
\end{gather}
\end{theorem}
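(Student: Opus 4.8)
The plan is to run the Lie symmetry algorithm of the preceding section directly on the geodesic system \eqref{geodesicequation} of the metric \eqref{eq:VB}. Using coordinates $(x^1,x^2,x^3,x^4)=(t,r,\theta,\varphi)$ and the abbreviation $f(t,r)=1-M(t)/r+Q(t)/r^{2}$, the first task is to write the geodesic equations explicitly, which requires the nonzero Christoffel symbols of \eqref{eq:VB}. These split into a purely spherical block ($\Gamma^\theta_{\varphi\varphi}=-\sin\theta\cos\theta$, $\Gamma^\varphi_{\theta\varphi}=\cot\theta$, together with the $\Gamma^r_{\theta\theta}$, $\Gamma^r_{\varphi\varphi}$ and $\Gamma^t$-angular terms dictated by $r^{2}(d\theta^{2}+\sin^{2}\theta\,d\varphi^{2})$) and a block carrying the dynamical data, built only from $f$, $\partial_r f$ and $\partial_t f$, hence from $M,Q,M',Q'$. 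This yields the four equations $E_i=\ddot x^i+\Gamma^i_{jk}\dot x^j\dot x^k=0$.

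Next I would take the generic generator ${\bf X}=\xi\,\partial_s+\eta^1\,\partial_t+\eta^2\,\partial_r+\eta^3\,\partial_\theta+\eta^4\,\partial_\varphi$ with coefficients depending on $(s,t,r,\theta,\varphi)$, form ${\bf X}^{[2]}$ through \eqref{jprolong}--\eqref{jprolongcoeff}, and impose the invariance criterion \eqref{invcond}. Substituting $\ddot x^i$ from $E_i=0$, each ${\bf X}^{[2]}E_i$ becomes a polynomial in the first derivatives $\dot t,\dot r,\dot\theta,\dot\varphi$ (cubic, through the terms coming from $D\xi$); equating to zero the coefficient of every distinct monomial yields the overdetermined system of determining PDEs for $\xi,\eta^1,\dots,\eta^4$. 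It is worth recording at the outset that the five generators of the preceding theorem, being Noether point symmetries, automatically satisfy \eqref{invcond}, so the only real content of the theorem is that the determining system admits nothing more.

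The core of the argument — and the step I expect to be the main obstacle — is the integration of this overdetermined system. The expected chain of reductions is: autonomy of \eqref{geodesicequation} forces $\xi$ to depend on $s$ alone, and the cubic-in-velocity terms make it affine in $s$; the two angular equations then force $\eta^3,\eta^4$ to be independent of $s,t,r$ and to satisfy the Killing equations of the round two-sphere, which reproduces precisely the rotational combinations ${\bf X}_3,{\bf X}_4,{\bf X}_5$; and finally the $t$- and $r$-equations, \emph{because $M$ and $Q$ are arbitrary}, force the coefficients multiplying $M,Q,M',Q'$ and their independent combinations to vanish separately, collapsing $\eta^1$ and $\eta^2$ to constants and eliminating the dilation part of $\xi$. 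The delicate point is exactly this last step: one must keep careful track of which terms survive for generic $M,Q$, since it is the genericity of these functions that prevents extra homotheties, affine symmetries or proper projective collineations from entering the list.

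Finally I would close by verifying the converse directly — one checks that each of ${\bf X}_1,\dots,{\bf X}_5$ satisfies \eqref{invcond}, the cases $\partial_s$ and $\partial_\varphi$ being immediate and the remaining ones following from a short computation with the Christoffel symbols recorded above — and by noting that the five fields are linearly independent and close into a Lie algebra, with ${\bf X}_1,{\bf X}_2$ central and $\{{\bf X}_3,{\bf X}_4,{\bf X}_5\}$ spanning a copy of $\mathfrak{so}(3)$ (indeed $[{\bf X}_3,{\bf X}_4]={\bf X}_5$, $[{\bf X}_3,{\bf X}_5]=-{\bf X}_4$, $[{\bf X}_4,{\bf X}_5]=\pm{\bf X}_3$). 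Hence \eqref{Liepoint1} is exactly the Lie algebra of point symmetries of the Vaidya--Bonner geodesics for arbitrary $M(t)$ and $Q(t)$.
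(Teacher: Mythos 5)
Your proposal follows essentially the same route as the paper: form the Euler--Lagrange (geodesic) equations of the Lagrangian \eqref{Lagrangian}, build the second prolongation, impose the invariance criterion \eqref{invcond}, split by monomials in $\dot t,\dot r,\dot\theta,\dot\varphi$ to obtain the determining PDEs, and solve them using the arbitrariness of $M$ and $Q$ --- which is exactly the (very tersely stated) computation in the paper's own proof. Your additional remarks (that the Noether symmetries are automatically Lie point symmetries, the closing verification, and the $\mathfrak{so}(3)\oplus\langle{\bf X}_1,{\bf X}_2\rangle$ structure) are consistent supplements rather than a different method.
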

\begin{proof}
The Euler-Lagrange geodesic equations associated with the Lagrangian (\ref{Lagrangian}) are
\begin{eqnarray}\label{eq:eulerlagra}
\left\{ \begin{array}{lcl}
E_1:3\ddot{t}+\frac{2Q(t)-r M(t)}{4r^3}\dot{t}^2+\frac{r\sin^{2}\theta}{2}\dot{\phi}^2+\frac{r}{2}\ddot{\theta}^2=0, \\[2mm]
E_2:5\ddot{r}+\frac{-2(rM^{'}(t)+4Q^{'}(t))(r M(t)-2Q(t))(r M(t)-r^2-Q(t))}{8r^5}\dot{t}^2\\[2mm] \hspace*{7mm}+\frac{r M(t)-2Q(t)}{4r^3}\dot{t}\dot{r}+\frac{M(t)-r^2-Q(t)}{4r}\dot{\theta}^2+\frac{(M(t)r-r^2-Q(t))(sin^2\theta)}{4r}\ddot{\phi}^2=0, \\[2mm]
E_3:3\ddot{\theta}+\frac{2}{r}\dot{r}\dot{\theta}-\sin(\theta)\cos(\theta)\dot{\phi}^2=0,\\[2mm]
E_4:4\ddot{\phi}+\frac{1}{r}\dot{r}\dot{\phi}+2\cot(\theta)\dot{\theta}\dot{\phi}=0 \\
\end{array}\right.
\end{eqnarray}
where the $M'(t),Q'(t)$ represents derivative in terms of $t$.
Now the second prolongation of the operator ${\bf X}$ is computed. ${\bf X}$. Applying the invariance criteria (\ref{invcond}) on equations  (\ref{eq:eulerlagra})
we have some determining equations. By inserting \eqref{infigener} to invariance condition \eqref{invcond}
and then comparing of the powers $\dot{t},\dot{r},\dot{\theta},\dot{\phi}$ ,
we obtain complete set of determining equations. By solving this system of PDEs we find that
\begin{eqnarray*}
&\xi=c_1,   \qquad  \eta^1=c_2, \qquad \eta^2=0,
\eta^3=-c_4\cos(\varphi)+c_5\sin(\varphi),&\\
&\eta^4=c_3+\cot(\theta)(c_4\sin(\varphi)+c_5\cos(\varphi)).&
\end{eqnarray*}
\end{proof}
\section{Algebraic structure of the operators \eqref{Liepoint1}}
The algebra ${\mathfrak g}=\langle{\bf X}_1, {\bf X}_2,\cdots,{\bf X}_5\rangle$ is non-solvable because
we have ${\mathfrak g^{(1)}}=[{\mathfrak g} ,{\mathfrak g}]=\langle{\bf X}_2,{\bf X}_3,{\bf X}_4\rangle$ and
we have the following chain of ideals ${\mathfrak g}\supset {\mathfrak g^{(1)}}\supset {\mathfrak g^{(2)}}\dots \neq 0$
which shows that ${\mathfrak g}$ is non--salvable . Also ${\mathfrak g}$ is not semi--simple, because its Killing form
$$ k=
\begin{bmatrix}
0 & 0 & 0 & 0 & 0  \\
0 & 0 & 0 & 0 & 0\\
0 & 0 & -2 & 0 & 0\\
0 & 0 & 0 & -2 & 0 \\
0 & 0 & 0 & 0 & -2
\end{bmatrix}$$
is degenerate. ${\mathfrak g}$ has a Levi--decomposition of the form ${\mathfrak g}={\mathfrak r}\oplus _{{\mathfrak h}}{\mathfrak h}$
 where ${\mathfrak r}=\langle {\bf X}_1,{\bf X}_2\rangle$ is the radical of ${\mathfrak g}$ and ${\mathfrak h}=\langle {\bf X}_3,{\bf X}_4,{\bf X}_5\rangle$
 is a semi--simple and non--solvable subalgebra of ${\mathfrak g}$.
%
\subsection{Classification of subalgebras}
Let $G$ be a Lie group with Lie algebra $\mathfrak{g}$.
There is an inner automorphism
$\tau_a\longrightarrow \tau \tau_a\tau^{-1}$ of the group $G$  for every arbitrary element $\tau\in G$. Every
automorphism of the group $G$ induces an automorphism of $\mathfrak{g}$.
The set of all these automorphism forms a Lie group called {\it the adjoint group $G^A$}.
For arbitrary infinitesimal generators ${\bf X}$ and ${\bf Y}$ in $\mathfrak{g}$, the linear mapping ${\rm
Ad}~{\bf X}({\bf Y}):{\bf Y}\longrightarrow[{\bf X},{\bf Y}]$ is an automorphism of $\mathfrak{g}$,
called {\it the inner derivation of $\mathfrak{g}$}. The set of all these
inner derivations equipped  by the Lie bracket $[{\rm Ad}{\bf X},{\rm Ad}{\bf Y}]={\rm Ad}[{\bf X},{\bf Y}]$ is a Lie
algebra $\mathfrak{g}^A$ called the {\it adjoint algebra of $\mathfrak{g}$}.
Two subalgebras in $\mathfrak{g}$ are {\it conjugate}
if there is a transformation of $G^A$ which takes one subalgebra
into the other. The collection of pairwise non-conjugate
$p$-dimensional subalgebras is called the {\it optimal system} of subalgebras
of order $p$ which is introduced by Ovsiannikov
\cite{Ovsiannikov}. Actually solving  the optimal system problem is to determine the conjugacy inequivalent
subalgebras with the property that any other subalgebra is
equivalent to a unique member of the list under some element of
the adjoint representation i.e. $\overline{\mathfrak{h}}\,{\rm Ad(\tau)}\,\mathfrak{h}$ for some $\tau$ in a given Lie group.

The adjoint action is given by the Lie series
\begin{eqnarray}
{\rm Ad}(\exp(q\,{\bf X}_i)){\bf X}_j={\bf X}_j-q\,[{\bf X}_i,{\bf X}_j]+\frac{q^2}{2}\,[{\bf X}_i,[{\bf X}_i,{\bf X}_j]]-\cdots,
\end{eqnarray}
where $q$ is a parameter and $i,j=1,\cdots,n$. Suppose
\begin{eqnarray}\label{eq:vectorfield}
{\bf X}=\sum_{i=1}^5a_i{\bf X}_i,
\end{eqnarray}
is an arbitrary member of the Lie algebra $\mathfrak{g}=\langle {\bf X}_1, \cdots, {\bf X}_5 \rangle$.
 Note that the elements of $\mathfrak{g}$ can be represented by vectors
$(a_1, \ldots, a_5)\in{\Bbb R}^5$ since
each of them can be written in the form (\ref{eq:vectorfield}) for some constants $a_1, \ldots, a_5$.
Hence, the adjoint action can be regarded as  a group of linear transformations of the vectors $(a_1, \ldots, a_5)$.
\begin{theorem}
An optimal system of one-dimension Lie subalgebras associated to the Lie point symmetries algebra of \eqref{Liepoint1} is generated by
\begin{gather}
\begin{array}{lcl}
&1)~{\bf X}_1+a_2{\bf X}_2+a_5{\bf X}_5 \qquad 2)~{\bf X}_1+a_2{\bf X}_2+a_3{\bf X}_3\qquad 3)~{\bf X}_1+a_2{\bf X}_2+a_4{\bf X}_4\\[3mm]
&~~~4)~{\bf X}_2+a_5{\bf X}_5\qquad 5)~{\bf X}_2+a_3{\bf X}_3 6)~{\bf X}_2+a_4{\bf X}_4\qquad 7)~{\bf X}_3 \qquad 8)~{\bf X}_4 \qquad 9)~{\bf X}_5
\end{array}
\end{gather}
\end{theorem}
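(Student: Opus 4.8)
The plan is to construct the optimal system by the standard Ovsiannikov--Olver method: take the general element $\mathbf{X}=\sum_{i=1}^5 a_i\mathbf{X}_i$ and simplify its coefficient vector $(a_1,\dots,a_5)$ as much as possible using the adjoint group $G^A$, splitting into cases according to which coefficients are forced to vanish. First I would assemble the commutator table from \eqref{Liepoint1}; the nonzero brackets are those internal to $\mathfrak{h}=\langle\mathbf{X}_3,\mathbf{X}_4,\mathbf{X}_5\rangle\cong\mathfrak{so}(3)$ (so $[\mathbf{X}_3,\mathbf{X}_4]=\mathbf{X}_5$, $[\mathbf{X}_4,\mathbf{X}_5]=\mathbf{X}_3$, $[\mathbf{X}_5,\mathbf{X}_3]=\mathbf{X}_4$, up to signs) together with the trivial action of the radical $\mathfrak{r}=\langle\mathbf{X}_1,\mathbf{X}_2\rangle$, which is central. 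From this table I would exponentiate each $\operatorname{ad}\mathbf{X}_i$ to obtain the five one-parameter adjoint maps $\operatorname{Ad}(\exp(q\,\mathbf{X}_i))$ acting linearly on $(a_1,\dots,a_5)$.

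The key structural observation is that $a_1$ and $a_2$ are adjoint-invariant (since $\mathbf{X}_1,\mathbf{X}_2$ are central and appear in no bracket), so the only real work happens on the $\mathfrak{so}(3)$-part $(a_3,a_4,a_5)$, on which the adjoint action of $\mathbf{X}_3,\mathbf{X}_4,\mathbf{X}_5$ is precisely the rotation group $SO(3)$. The classical fact is that $SO(3)$ acts transitively on spheres of fixed radius in $\mathbb{R}^3$, so any nonzero $(a_3,a_4,a_5)$ can be rotated to a multiple of a single chosen axis, say into the $\mathbf{X}_5$-direction; this collapses all subalgebras with $(a_3,a_4,a_5)\neq 0$ into one family. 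I would then organize the case analysis by first asking whether $a_1=0$ or $a_1\neq 0$ (if $a_1\neq 0$, rescale to $a_1=1$), and within each branch whether the $\mathfrak{so}(3)$-part is zero or not. The branch $a_1=1$ yields representatives $\mathbf{X}_1+a_2\mathbf{X}_2+a_5\mathbf{X}_5$ when the rotation part is nonzero, and $\mathbf{X}_1+a_2\mathbf{X}_2$ — i.e.\ case 2 or 3 with $a_3=a_4=0$, or more precisely the $a_5=0$ specialization — when it vanishes; the branch $a_1=0,\ a_2\neq 0$ gives $\mathbf{X}_2+a_5\mathbf{X}_5$ and $\mathbf{X}_2$; the branch $a_1=a_2=0$ gives $\mathbf{X}_5$ (equivalently $\mathbf{X}_3,\mathbf{X}_4$ after rotation) and $0$. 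Listing the items 7,8,9 separately as $\mathbf{X}_3,\mathbf{X}_4,\mathbf{X}_5$ is a presentational choice; intrinsically they are conjugate, so I would remark that they represent the same orbit.

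The main obstacle — and the point where care is genuinely needed rather than routine computation — is justifying that the remaining representatives are \emph{pairwise non-conjugate}, i.e.\ that the list has no redundancy beyond what is acknowledged. For this I would exhibit adjoint invariants that separate the families: the pair $(a_1,a_2)$ up to common rescaling is an invariant (distinguishing the $\mathbf{X}_1$-branch from the $\mathbf{X}_2$-branch from the pure-rotation branch), and the quantity $a_3^2+a_4^2+a_5^2$ relative to $a_1^2$ (or its vanishing) distinguishes whether a rotation part is present and, once $a_1$ is normalized, fixes the value of $a_5$ in family 1 up to sign. A subtlety is that after normalizing $a_1=1$ one has only the residual stabilizer of that normalization (here trivial, since rescaling is used up), so $a_2$ and $|a_5|$ become honest moduli — which is why the optimal-system entries carry free parameters $a_2,a_3,a_4,a_5$. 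I would close by noting that combining the reduction (every element is conjugate to one on the list) with the invariant-based separation (distinct families are inequivalent) establishes exactly the nine-item optimal system stated.
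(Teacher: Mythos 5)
Your proposal is correct and in fact goes further than the paper's own argument. The mechanical core is the same — simplify $(a_1,\dots,a_5)$ by the adjoint action and split into cases — but you organize it around the structural fact that $\mathbf{X}_1,\mathbf{X}_2$ are central (so $a_1,a_2$ are adjoint invariants) while $\operatorname{Ad}$ acts on $(a_3,a_4,a_5)$ as the full rotation group of the $\mathfrak{so}(3)$ factor, transitively on spheres. The paper instead writes out the five matrices $M_i^{s_i}$, composes them, and then kills pairs of rotational coefficients with explicit $\arctan$ choices. Your viewpoint buys two things the paper does not deliver. First, it exposes that the stated nine-item list is redundant: since a rotation carries $\mathbf{X}_3$ to $\mathbf{X}_4$ to $\mathbf{X}_5$ while fixing $a_1,a_2$, cases $1$--$3$ are mutually conjugate families, as are $4$--$6$ and $7$--$9$; an optimal system is by definition pairwise non-conjugate, so strictly the theorem should list only one representative from each triple (your remark that $7,8,9$ ``represent the same orbit'' is exactly the right observation, and it applies equally to the other two triples). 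Second, you supply the half of the argument the paper omits entirely: the verification that the surviving representatives are inequivalent, via the invariants $(a_1,a_2)$ up to common rescaling and $(a_3^2+a_4^2+a_5^2)/a_1^2$. The paper only performs the reduction step and never checks non-conjugacy, which is why the redundancy went unnoticed. Two small polish points for your write-up: after normalizing $a_1=1$ the residual rotation by $\pi$ about the $\mathbf{X}_3$-axis flips the sign of $a_5$, so the modulus in family $1$ is really $a_5\ge 0$ rather than $a_5$ up to an unresolved sign; and it is worth stating explicitly that scaling by any nonzero $\lambda$ is permitted because it preserves the spanned one-dimensional subalgebra, which is what licenses the normalizations $a_1=1$, $a_2=1$, etc.
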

\begin{proof}
The function $F_{i}^{s_{i}}:{\mathfrak g}\mapsto {\mathfrak g}$ defined by
${\bf X}\mapsto {\rm Ad}(\exp(s_i{\bf X}_i).{\bf X})$ is a linear map for $i=1,\dots,5$.
The matrices $M_{i}^{s_{i}}$ of $F_{i}^{s_{i}}$ with respect to basis $\{{\bf X}_1,\cdots ,{\bf X}_5\}$ are
$$M_1^{s_1}=
\begin{bmatrix}
1 & 0 & 0 & 0 & 0\\
0 & 1 & 0 & 0 & 0\\
0 & 0 & 1 & 0 & 0\\
0 & 0 & 0 & 1 & 0\\
0 & 0 & 0 & 0 & 1
\end{bmatrix}
\qquad
M_2^{S_2}=
\begin{bmatrix}
1 & 0 & 0 & 0 & 0\\
0 & 1 & 0 & 0 & 0\\
0 & 0 & 1 & 0 & 0\\
0 & 0 & 0 & 1 & 0\\
0 & 0 & 0 & 0 & 1
\end{bmatrix} \qquad
 M_3^{s_3}=\begin{bmatrix}
1 & 0 & 0 & 0 & 0\\
0 & 1 & 0 & 0 & 0\\
0 & 0 & 1 & 0 & 0\\
0 & 0 & 0 & \cos(s_3) & -\sin(s_3)\\
0 & 0 & 0 & \sin(s_3) &  \cos(s_3)
\end{bmatrix}$$\vspace*{5mm}
$$M_4^{s_4}=
\begin{bmatrix}
1 & 0 & 0 & 0 & 0\\
0 & 1 & 0 & 0 & 0\\
0 & 0 & \cos(s_4) & 0 & \sin(s_4)\\
0 & 0 & 0 & 1 & 0\\
0 & 0 & -\sin(s_4) & 0 & \cos(s_4)
\end{bmatrix}
\qquad M_5^{s_5}=
\begin{bmatrix}
1 & 0 & 0 & 0 & 0\\
0 & 1 & 0 & 0 & 0\\
0 & 0 & \cos(s_5) & -\sin(s_5) & 0\\
0 & 0 & \sin(s_5) &  \cos(s_5)  & 0\\
0 & 0 & 0 & 0 & 1
\end{bmatrix},$$
therefore
\begin{eqnarray*}
&F_5^{s_5}\circ F_4^{s_4}\circ F_3^{s_3} \circ F_2^{s_2}\circ F_1^{s_1}:{\bf X}\mapsto a_1{\bf X}_1+a_2{\bf X}_2+[(\cos(s_4)\cos(s_5))a_3+\\
&(\cos(s_5)\sin(s_3)\sin(s_4)+\cos(s_3)\sin(s_5))a_4+
(\sin(s_3)\sin(s_5)-\cos(s_3)\sin(s_4)\cos(s_5))a_5]{\bf X}_3+\\
&[(-\sin(s_5)\cos(s_4))a_3+
(\cos(s_3)\cos(s_5)-\sin(s_3)\sin(s_4)\sin(s_5))a_4+
(\cos(s_3)\sin(s_4)\sin(s_5)\\
&+\sin(s_3)\cos(s_5))a_5]{\bf X}_4+
[(-\sin(s_3)\cos(s_4))a_4+(\cos(s_3)\cos(s_4))a_5]{\bf X}_5.
\end{eqnarray*}
The one-dimensional optimal system of subalgebras associated to \eqref{Liepoint1} is determined as follows.
The operator ${\bf X}$ is simplified with the following cases:
\begin{itemize}
\item
If $a_1\neq 0$ we have three cases:

 1) The coefficients ${\bf X}_3,{\bf X}_4$ could be vanished by $F_3^{s_3},F_4^{s_4}$ by setting $s_3=\arctan(\frac{a_4}{a_5})$ and $s_4=-\arctan(\frac{a_3}{a_5})$ respectively.
 By scaling ${\bf X}$ we can assume that $a_1=1$ so ${\bf X}$ is reduced to case (1).

2) The coefficients ${\bf X}_4,{\bf X}_5$ could be vanished by $F_3^{s_3},F_4^{s_4}$ by setting $s_3=\arctan(\frac{a_4}{a_5}), s_4=\arctan(\frac{a_5}{a_3})$ respectively.
By scaling ${\bf X}$ we can assume that $a_1=1$ so ${\bf X}$ is reduced to case (2).

3) The  coefficients ${\bf X}_3,{\bf X}_5$ could be vanished by $F_3^{s_3},F_5^{s_5}$, by setting $s_3=-\arctan(\frac{a_5}{a_4}), s_5=\arctan(\frac{a_3}{a_4})$ respectively.
By scaling ${\bf X}$ we assume that $a_1=1$ so  ${\bf X}$ is reduced to case (3).
\item
Now If $a_1=0, a_2\neq 0$,we have three cases:

1) The coefficients ${\bf X}_3,{\bf X}_4$  would be vanished by $F_3^{s_3},F_4^{s_4}$ by setting $s_3=\arctan(\frac{a_4}{a_5}),s_4=\arctan(\frac{a_3}{a_5})$ respectively. By scaling ${\bf X}$, we can put $a_2=1$ and then ${\bf X}$ is reduced to case (4).

2) The coefficients ${\bf X}_4,{\bf X}_5$ could be vanished by
$F_3^{s_3},F_4^{s_4}$,by setting
$s_3=\arctan(\frac{a_4}{a_5}),s_4=\arctan(\frac{a_5}{a_3})$
respectively. By scaling ${\bf X}$ we assume that $a_2=1$ so ${\bf X}$ is reduced to case (5)

3) The coefficients ${\bf X}_3,{\bf X}_5$ could be vanished by
$F_3^{s_3},F_5^{s_5}$, by setting
$s_3=-\arctan(\frac{a_5}{a_4}),s_5=\arctan(\frac{a_3}{a_4})$ respectively. By scaling ${\bf X}$, we can put $a_2=1$ and then ${\bf X}$ is reduced to case (6).
\item
If $ a_1=0 , a_2=0 , a_3\neq 0$,  the coefficients ${\bf X}_4,{\bf X}_5$ could be vanished by $F_3^{s_3},F_4^{s_4}$, by setting
$s_3=\arctan(\frac{a_4}{a_5}),s_4=\arctan(\frac{a_5}{a_3})$
respectively.By scaling ${\bf X}$ we can put $a_3=1$ so ${\bf X}$ is reduced to case (7).
\item
If $ a_1=0 , a_2=0 , a_3=0 ,a_4\neq 0$, the coefficient ${\bf X}_5$ could be vanished by $F_3^{s_3}$,by setting $s_3=-\arctan(\frac{a_5}{a_4})$. By scaling ${\bf X}$, we can put $a_4=1$ and then ${\bf X}$ is reduced to case (8).
\item
If $ a_1=0, a_2=0, a_3=0, a_4=0, a_5\neq 0$, by scaling ${\bf X}$ we can put $a_5=1$ so it reduces to case (9).
\end{itemize}
\end{proof}
\section{Some illustrative examples}
\subsection{Symmetry analysis of the metric \eqref{eq:VB} for $M(t)=1$ and $Q(t)=t$}
In this case the associated Lagrangian of the metric \eqref{eq:VB}  is
\begin{equation} \label{lagra1-1}
\mathcal L =-(1-\frac{1}{r}+\frac{t}{r^2})\dot{t}^2-2\dot{t}\dot{r}+r^2(\dot {\theta}^2+\sin^2(\theta)\dot {\phi}^2).
\end{equation}
Suppose that the vector field ${\bf X}=\xi\;\partial_s+\eta^1\;\partial_t+\eta^2\;\partial_r+\eta^3\;\partial_\theta+\eta^4\partial_\varphi$
is the Noether symmetry operator of Lagrangian \eqref{lagra1-1}.
By solving the equation ${\bf X}^{[1]}{\mathcal L}+(D_s\xi){\mathcal L}=0$
 we can find the following Noether symmetries:
\begin{align}\label{nother1-1-1}
{\bf X}_1=\partial_s, \quad {\bf X}_2=\partial_\varphi,  \quad
{\bf X}_3=\cot(\theta)\sin(\varphi)\partial_\varphi-\cos(\varphi)\partial_\theta, \quad
{\bf X}_4=\cot(\theta)\cos(\varphi)\partial_\varphi+\sin(\varphi)\partial_\theta.
\end{align}
The Euler-Lagrange geodesic equations associated with the Lagrangian \eqref{lagra1-1} are
\begin{eqnarray}\label{eq:lagran1-1}
\left\{\begin{array}{lcl}
E_1:5\ddot{t}+\frac{2t-r}{4r^3}\ddot{t}^2+\frac{r}{2}\dot{\theta}^2+\frac{r\sin^2\theta}{2}\dot{\phi}^2=0\\ [2mm]
E_2:3\ddot{r}+\frac{2r^3+(r^2+t-r)(r-2t)}{8r^5}\dot{t}^2+\frac{r-2t}{2r^3}\dot{t}\dot{r}
+\frac{r-r^2-t}{4r}(\dot{\theta}^2+\sin^2\theta\dot{\phi}^2)=0\\ [2mm]
E_3:3\ddot{\theta}+\frac{2}{r}\dot{r}\dot{\theta}-\sin(\theta)\cos(\theta)\dot{\phi}^2=0\\ [2mm]
E_4:4\ddot{\phi}+\frac{2}{r}\dot{r}\dot{\phi}+2\cot(\theta)\dot{\theta}\dot{\phi}=0.
\end{array}\right.
\end{eqnarray}
Now suppose ${\bf X}$ is a Lie point symmetry generator of (\ref{lagra1-1}) thus ${\bf X}^{[2]}E_i\big|_{E_i=0}=0$ for $i=1,2,3,4$.
By solving this equation we obtain
\begin{eqnarray}
 \xi=c_1, \; \eta^1=0, \;
\eta^2=0\quad \eta^3=-c_3\cos(\varphi)+c_4\sin(\varphi),\;
\eta^4=c_2+\cot(\theta)(c_3\sin(\varphi)+c_4\cos(\varphi)).
\end{eqnarray}
Therefore Lie symmetry generators associated to the  metric \eqref{eq:VB} for $M(t)=1$ and $Q(t)=t$ are:
\begin{align}\label{liepoint1-1}
{\bf X}_1=\partial_s \quad {\bf X}_2=\partial_\varphi \quad
{\bf X}_3=\cot(\theta)\sin(\varphi)\partial_\varphi-\cos(\varphi)\partial_\theta \quad
{\bf X}_4=\cot(\theta)\cos(\varphi)\partial_\varphi+\sin(\phi)\partial_\theta,
\end{align}
and  commutator table of these symmetry generators is given in following table:
\begin{table}[ht]
\caption{commutator table (\ref{liepoint1-1})}
\centering
\begin{tabular}{c c c c c}
\hline\hline
  [~,~] & ${\bf X}_1$ & ${\bf X}_2$ & ${\bf X}_3$ & ${\bf X}_4$ \\
\hline
${\bf X}_1$ & 0 & 0 & 0 & 0 \\
\hline
${\bf X}_2$ & 0 & 0 & ${\bf X}_4$ & -${\bf X}_3$\\
\hline
${\bf X}_3$ & 0 & -${\bf X}_4$ & 0 & ${\bf X}_2$  \\
\hline
${\bf X}_4$ & 0 & ${\bf X}_3$ & $-{\bf X}_2$ & 0\\
  \hline
\end{tabular}
\end{table}

The algebra ${\mathfrak g}=\langle{\bf X}_1, {\bf X}_2,\cdots,{\bf X}_4\rangle$ is non-solvable because
 ${\mathfrak g}\supset {\mathfrak g^{(1)}}\neq 0$ where  ${\mathfrak g^{(1)}}=[{\mathfrak g} ,{\mathfrak g}]=\langle{\bf X}_2,{\bf X}_3,{\bf X}_4\rangle$.
 Also ${\mathfrak g}$ is not semi-simple, because its Killing form
$$ k=
\begin{bmatrix}
0 & 0 & 0 & 0  \\
0 & -2 & 0 & 0 \\
0 & 0 & -2 & 0 \\
0 & 0 & 0 & -2
\end{bmatrix}$$
is degenerate. ${\mathfrak g}$ has a Levi-decomposition of the form
 ${\mathfrak g}={\mathfrak r}\oplus _{{\mathfrak h}}{\mathfrak h}$ where ${\mathfrak r}=\langle {\bf X}_1\rangle$
 is the radical of ${\mathfrak g}$ and ${\mathfrak h}=\langle {\bf X}_2,{\bf X}_3,{\bf X}_4\rangle$ is a semi-simple and non-solvable subalgebra of ${\mathfrak g}$.\\
\subsection{Symmetry analysis of the metric \eqref{eq:VB} for $M(t)=t, Q(t)=t^2$}
The  Vaidya metric in  special case, $M(t) = t$, is known as the Papapetrou model \cite{Dwivedit}. Therefore in this case we consider
the Vaidya-Bonner metric  \eqref{eq:VB} for $M(t)=t ,Q(t)=t^2$.

The Lie algebra of Noether symmetries associated to metric \eqref{eq:VB} for $M(t)=t ,Q(t)=t^2$ is a 5-dimensional algebra spanned by the following vector fields:
\begin{eqnarray*}
&{\bf X}_1=s\;\partial_s+\frac{t}{2}\;\partial_t+\frac{r}{2}\;\partial_r \quad {\bf X}_2=\partial_s \quad
{\bf X}_3=\partial_\varphi \qquad {\bf X}_4=\cot(\theta)\sin(\varphi)\partial_\varphi-\cos(\varphi)\partial_\theta&\\
&{\bf X}_5=\cot(\theta)\cos(\varphi)\partial_\varphi+\sin(\varphi)\partial_\theta.&
\end{eqnarray*}
The Euler-Lagrange geodesic equations are
\begin{eqnarray}\label{eq:lagranu-u^2}
\left\{\begin{array}{lcl}
E_1:3\ddot{t}+(\frac{2t^2-rt}{4r^3})\dot{t}^2+\frac{r}{2}\dot{\theta}^2+\frac{r\sin^2(\theta)}{2}\dot{\phi}^2=0\\ [2mm]\label{eq:lagranu-u^2-1}
E_2:5\ddot{r}+\frac{2r^3(2t-r)+(r^2-rt+t^2)(rt-2t^2)}{8r^5}\dot{t}^2+\frac{tr-2t^2}{2r^3}\dot{r}\dot{t}
+\frac{r^2-rt+t^2}{4r}(\sin^2(\theta)\dot{\phi}^2-\dot{\theta}^2)=0\\ [2mm] \label{eq:lagrau-u^2-2}
E_3:3\ddot{\theta}+\frac{1}{r}\dot{r}\dot{\theta}-\sin(\theta)\cos(\theta)\dot{\phi}^2=0\\ [2mm]\label{eq:lagranu-u^2-3}
E_4:4\ddot{\phi}+\frac{2}{r}\dot{r}\dot{\phi}+2\cot(\theta)\dot{\theta}\dot{\phi}=0,
\end{array}\right.
\end{eqnarray}
by applying the symmetry criteria  we have
\begin{eqnarray*}
&\xi=c_1s+c_2 \quad \eta^1=c_1t, \quad \eta^2=c_1r, \quad
\eta^3=-c_4\cos(\varphi)+c_5\sin(\varphi)&\\
&\eta^4=c_3+\cot(\theta)(c_4\sin(\varphi)+c_5\cos(\varphi)).&
\end{eqnarray*}
Thus the Lie point symmetries are as follows:
\begin{eqnarray}\nonumber
&{\bf X}_1=s\;\partial_s+t\;\partial_u+r\;\partial_r \quad {\bf X}_2=\partial_s, \quad
{\bf X}_3=\partial_\varphi \qquad {\bf X}_4=\cot(\theta)\sin(\varphi)\partial_\varphi-\cos(\varphi)\partial_\theta&\\\label{eq:12545}
&{\bf X}_5=\cot(\theta)\cos(\varphi)\partial_\varphi+\sin(\varphi)\partial_\theta.&
\end{eqnarray}

The commutator table of symmetry generators is given in following table:
\begin{table}[ht]
\caption{commutator table (\ref{eq:12545})}
\centering
\begin{tabular}{c c c c c c}
\hline\hline
[~,~] & ${\bf X}_1$ & ${\bf X}_2$ & ${\bf X}_3$ & ${\bf X}_4$ & ${\bf X}_5$\\
\hline
${\bf X}_1$ & 0 & $-{\bf X}_2$ & 0 & 0 & 0\\
\hline
${\bf X}_2$ & ${\bf X}_2$ & 0 & 0 & 0 & 0\\
\hline
${\bf X}_3$ & 0 & 0 & 0 & ${\bf X}_5$ & $-{\bf X}_4$ \\
\hline
${\bf X}_4$ & 0 & 0 & $-{\bf X}_5$ & 0 & ${\bf X}_3$ \\
\hline
${\bf X}_5$ & 0 & 0 & ${\bf X}_4$ & $-{\bf X}_3$ & 0\\
\hline
\end{tabular}
\end{table}
${\mathfrak g}$ is non-solvable because we have ${\mathfrak g^{(1)}}=[{\mathfrak g} ,{\mathfrak g}]=$ $\langle {\bf X}_2,{\bf X}_3,{\bf X}_4,{\bf X}_5\rangle$ , ${\mathfrak g^{(2)}}=[{\mathfrak g^{(1)}} ,{\mathfrak g^{(1)}}]=$ $\langle {\bf X}_3,{\bf X}_4,{\bf X}_5\rangle$,  ${\mathfrak g^{(3)}}=[{\mathfrak g^{(2)}} ,{\mathfrak g^{(2)}}]=$ $\langle {\bf X}_3,{\bf X}_4,{\bf X}_5\rangle$, thus we have the following chain of ideals ${\mathfrak g}\supset {\mathfrak g^{(1)}}\supset {\mathfrak g^{(2)}}\dots \neq 0$ which shows that ${\mathfrak g}$ is non-salvable . Also ${\mathfrak g}$ is not semi-simple, because its Killing form
$$ k=
\begin{bmatrix}
1 & 0 & 0 & 0 & 0\\
0 & 0 & 0 & 0 & 0\\
0 & 0 & -2 & 0 & 0\\
0 & 0 & 0 & -2 & 0\\
0 & 0 & 0 & 0 & -2
\end{bmatrix}$$
is degenerate.
${\mathfrak g}$ has a Levi decomposition of the form ${\mathfrak g}={\mathfrak r}\oplus _{{\mathfrak h}}{\mathfrak h}$ where ${\mathfrak r}=\langle {\bf X}_1,{\bf X}_2\rangle$ is the radical of ${\mathfrak g}$ and ${\mathfrak h}=\langle {\bf X}_3,{\bf X}_4,{\bf X}_5\rangle$ is a semi-simple and non-solvable subalgebra of ${\mathfrak g}$.
\section*{Acknowledgement}
The second  author (Rohollah Bakhshandeh-Chamazkoti) acknowledges the funding support
of Babol Noshirvani University of Technology under Grant No. BNUT/391024/99.
%

\bibliographystyle{amsplain}

\end{document}